\documentclass[a4paper,11pt,intlimits,oneside]{amsart}
\usepackage{showkeys}
\usepackage{dsfont}

\usepackage{enumerate}
\usepackage{amsfonts}
\usepackage{amsmath,amsthm,amssymb}

\newcommand{\comment}[1]{}

\newtheorem{theorem}{Theorem}
\newtheorem{definition}[theorem]{Definition}
\newtheorem{proposition}[theorem]{Proposition}
\newtheorem{lemma}[theorem]{Lemma}

\newtheorem{remark}[theorem]{Remark}

\newcommand\supp{\qopname\relax o{supp}}

\newcommand{\NN}{\mathbb N}
\newcommand{\ZZ}{\mathbb Z}

\newcounter{rea}
\setcounter{rea}{0}


\begin{document}

\title[Uncertainty principle]{Equality cases for the uncertainty principle in finite Abelian groups }

\author{Aline Bonami \& SaifAllah Ghobber}


\address[Aline Bonami]{
\newline\indent F\'ed\'eration Denis Poisson
\newline\indent MAPM-UMR 6628 CNRS
\newline\indent Universit\'e d'Orl\'eans
\newline\indent 45067 Orl\'eans France.}
\email{aline.bonami@univ-orleans.fr}
\address[SaifAllah Ghobber]{
\newline\indent D\'{e}partement de      Math\'{e}matiques Appliqu\'ees
\newline\indent Institut   Pr\'eparatoire  aux       \'{e}tudes    d'ing\'{e}nieurs  de Nabeul
\newline\indent Universit\'e de  Carthage
\newline\indent Campus Universitaire, Merazka, 8000, Nabeul, Tunisie.}
\email{Saifallah.Ghobber@math.cnrs.fr}

\subjclass{42A99}
\keywords{Uncertainty Principle, Finite Abelian Groups, Fourier Matrices.}
\thanks{The authors have been partially supported by the project ANR AHPI number ANR-07-BLAN-0247-01 and CMCU program 07G 1501.}

\begin{abstract}
We consider the families of finite Abelian groups $\ZZ/p\ZZ\times \ZZ/p\ZZ$, $\ZZ/p^2\ZZ$ and $\ZZ/p\ZZ\times \ZZ/q\ZZ$ for $p,q$ two distinct prime numbers.
For the two first families we give a simple characterization of all functions whose  support has cardinality $k$ while the size of the spectrum satisfies a minimality condition.  We do it for a large number of values of $k$ in the third case. Such equality cases were previously known when $k$ divides the cardinality of the group, or for groups $\ZZ/p\ZZ$.
\end{abstract}

\maketitle

\section{Introduction}\label{sec:intro}

In this work we consider a finite Abelian group $G$, which can always be described as
\begin{equation}\label{group}
 G=\ZZ/{p_1}^{n_1}\ZZ \times \cdots \times \ZZ/{p_r}^{n_r}\ZZ  ,\end{equation}
  where the integers $p_i$ are prime numbers with possible repetition.

  We will write
  $$\ZZ_n:=\ZZ/n\ZZ$$
  to simplify notation.

  Uncertainty principles show  how small the support and the spectrum of a nonzero function $f$   may be simultaneously. The Fourier transform of $f$ is defined, for $\chi \in \widehat G$, as
  $$\widehat f (\chi):=\sum_{x\in G} f(x)\chi(-x).$$
  Here $\widehat G$ is the group of characters of $G$, which identifies to $G$. More precisely, for $G$ given by \eqref{group}, for some element $x$,
  which may be written as $x=(x_1, \ldots, x_{r})$, and some character $\chi$  that identifies with $y=(y_1, \ldots, y_{r})$,  \begin{equation}\label{character}
\chi(x)=\exp\left(2\pi i\sum _{j=1}^r\frac{x_jy_j}{p_j^{\,n_j}}\right).
\end{equation}
The spectrum of $f$ is the support of its Fourier transform $\widehat f$.  We refer to \cite{Terras} for background on finite Abelian groups.

  The first well-known estimate has been stated  by Matolcsi and Sz\"ucs in \cite{MS}. It is usually referred to as Donoho-Stark Uncertainty Principle and deals simultaneously with cardinalities of the supports of  a nonzero function $f$ and its Fourier transform $\widehat f$ (see \cite{DS} or \cite{Terras}):
  \begin{equation}\label{ds}
    |\supp (f)|\times |\supp (\widehat f)|\geq |G|.
  \end{equation}
 Here  $|A|$ stands for the cardinality of the finite set $A$. Let us also fix the following notation. For $A$ a subset of $G$ we note $ \mathds{1}_{A}$ its characteristic function. When $A$ is reduced to one point $a$, it is the Dirac mass at $a$. We note $\delta_G$ the Dirac mass at $0$ on the group $G$, so that  $\delta_G=\mathds{1}_{\{0\}}$.

 Equality cases for this inequality have been entirely described (see \cite{DS}), that is, nonzero functions $f$ for which $|\supp (f)|\times |\supp (\widehat f)|= |G|$. Up to translation, modulation and multiplication by a constant, they are given by characteristic functions of subgroups of $G$.

 Then, it has been observed by Tao in \cite{Tao}\footnote{This was also  observed  by other authors, in particular by A. Bir\'o, see the Arxiv paper
  of Frenkel \cite{Fr}. One  finds there different references for the lemma of Chebotarev  on subdeterminants of a Van der Monde matrix (1926, \cite{Ch}), which is the clue of the proof. Remark that Chebotarev's Lemma is easily deduced from a theorem of Mitchell (1881, \cite{Mi}), as observed in \cite{DVB1}.} that Inequality \eqref{ds} can be considerably improved for $\ZZ_p$ when $p$ is a prime number. Namely, he proved the following theorem.
 \begin{theorem}[Tao]\label{Tao}
  When $f$ is a nonzero function on $\ZZ_p$ with $p$ prime, then
 \begin{equation}\label{tao}
    |\supp (f)|+ |\supp (\widehat f)|\geq |G|+1.
  \end{equation}
Moreover, for any $A\subset G$ and $B\subset \widehat G$ such that $|A|+|B|=|G|+s$, the space of functions with support in $A$ and spectrum in $B$ is exactly of dimension $s$.
\end{theorem}
In Tao's paper  \cite{Tao}, the second part of the theorem is not exactly stated in this way, but this is seen by an easy modification of the proof.

Tao's Theorem contains a complete (but non explicit) description of equality cases, that is, of all nonzero functions $f$ for which $ |\supp (f)|+ |\supp (\widehat f)|= |G|+1$. Namely,  given $A$ and $B$ such that $|A|+|B|=|G|+1$, there is a unique (up to a constant) function $f$ such that $\supp (f)=A$ and $\supp(\widehat f)=B$.

\medskip

In order to describe the situation for any finite Abelian group, let us give some definitions. Firstly, for any nonempty set $A$ we denote $L(A)$ the space of complex functions on $A$. Then we will use the following notations.
\begin{definition}
For $k,l$ two positive integers, we set
$$E(k,l):= \Big\{f \in L(G); |\supp(f)|\leq k, |\supp(\widehat{f})|=l \Big\}.$$
$$E_0(k,l):= \Big\{f \in L(G); |\supp(f)|= k, |\supp(\widehat{f})|=l \Big\}.$$
\end{definition}

Next, for $1\leq k\leq |G|$, let us define {\sl  Meshulam's Function}, which we denote by $\theta(\cdot, G)$. It has been  introduced by Meshulam in \cite{M} as
\begin{equation}
\theta (k, G) := \min \{l ;\;  \ \ E(k,l)\neq \emptyset\}. \label{theta}
\end{equation}
For $|G|$ prime, by Tao's Theorem we have  $\theta (k,G)=|G|-k+1$ while, in other cases, we have only the inequality $\theta (k,G)\leq|G|-k+1$
(see Lemma \ref{dim1b} below), with equality when $k=|G|-1$ only (see Proposition \ref{caseminus1}).  Donoho-Stark's Uncertainty Principle asserts that in general $\theta (k,G)\geq |G|/k$, with possible equality when $k$ is a divisor of $|G|$.

Meshulam has given a better lower bound for $\theta(\cdot, G)$ in \cite{M}, see also \cite{KPR} for comments and extensions to the windowed Fourier transform. More precisely, let $u(\cdot, G)$ be the largest convex function on $[1, |G|]$ that coincides with $|G|/d$ at each divisor $d$ of $|G|$. Equivalently, $u(\cdot, G)$  is continuous and linear between two consecutive divisors of $|G|$. Then Meshulam has shown that $\theta(\cdot, G)\geq u(\cdot, G)$. This inequality is not sharp in general. We will see in particular that it is not sharp in general for groups $\ZZ_p\times \ZZ_q$, with $p$ and $q$ two different prime numbers.

The same problem has been considered recently by Delvaux and Van Barel \cite{DVB1, DVB2} with a different vocabulary. These authors give a large number of examples and revisit proofs with elementary methods of linear algebra. They give the precise value of Meshulam's Function as a minimum (while Meshulam stated only an inequality), see for instance Theorem 5 in \cite{DVB2}. They also have partial results in the direction that we consider here.

We are interested in the values $k$ for which there exist equality cases according to the following definition.
\begin{definition}
We say that there are equality cases for $(k, G)$ if the set $ E_0(k, \theta(k, G))$ is not empty. In this case, we call equality case for $(k, G)$ any nonzero function $ f \in L(G)$ that belongs to the set $ E_0(k, \theta(k, G))$. We say that $f$ is an equality case for $G$  when it is an equality case for some $(k,G)$.
\end{definition}
Delvaux and Van Barel implicitly pose the problem of finding all equality cases, that is, giving a complete description of the set $ E_0(k, \theta(k, G))$ for all $(k, G)$.

We will address this question in three particular cases. More precisely, we will consider groups $\ZZ_{p^2}$, $\ZZ_p\times \ZZ_p$ and $\ZZ_p\times \ZZ_q$, for $p,q$ distinct prime numbers. In  these three cases, except for a small number of values of $k$ in the last case, we are able to give a simple description of all equality cases, in the same spirit as  the already  known one for $k$ a divisor of $|G|$. It is particularly simple to describe the equality cases in the third case.
 \begin{theorem} \label{twoprimes} Let $p>q$ be two  prime numbers. Then, for $k$ such that $\theta(k, \ZZ_p\times \ZZ_q)<\theta(k-1, \ZZ_p\times \ZZ_q)$, except perhaps for  $k=q\frac{p+1}{q+1}$ when  $q+1$ divides $p+1$,
a function $f$ is an equality case for $(k,\ZZ_p\times \ZZ_q)$ if and only if $f$ may be written as a tensor product $g\otimes h$, where $g$   is an equality case for $\ZZ_p$, $h$ is an equality case for $\ZZ_q$ and, moreover, one of the two functions $g$ or $h$ is a character  or a Dirac mass.
 \end{theorem}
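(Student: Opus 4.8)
The plan is to combine the one-dimensional Tao Theorem on each factor with the Donoho–Stark structure of equality cases for the product, exploiting the fact that $\ZZ_p \times \ZZ_q \cong \ZZ_{pq}$ and that $\gcd(p,q)=1$. First I would compute Meshulam's function $\theta(\cdot,\ZZ_p\times\ZZ_q)$ explicitly. Since the divisors of $pq$ are $1,p,q,pq$, the function $u(\cdot,G)$ is piecewise linear with breakpoints at these divisors; I expect $\theta(k,G)=u(k,G)$ here, which one gets by exhibiting explicit functions achieving the bound (tensor products of Tao-extremizers on $\ZZ_p$ with Diracs/characters on $\ZZ_q$, and vice versa) together with Meshulam's lower bound $\theta\ge u$. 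So an equality case for $(k,G)$ with, say, $1\le k\le q$ (WLOG $p\le q$) will have $|\supp(\widehat f)| = \theta(k,G)$ lying on the segment joining the value $pq$ at $k=1$ to the value $p$ at $k=q$.

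The core step is a tensor-factorization argument. Write $f$ as a $\ZZ_p\times\ZZ_q$ array. Let $A=\supp(f)$ and $B=\supp(\widehat f)$, and let $a$ (resp. $b$) be the number of nonzero rows (resp. columns) of $f$ in the natural indexing, with analogous counts $a',b'$ for $\widehat f$. The idea is that $|A|\le k$ forces $A$ to sit inside a small grid, and the one-dimensional Tao inequality applied to the restriction of $f$ to each nonzero row (a function on $\ZZ_q$) and to the "column sums" after partial Fourier transform in the first variable (functions on $\ZZ_p$) yields lower bounds on the widths of $B$ in each direction. Multiplying these and comparing with the value $\theta(k,G)$ forces all the intermediate inequalities to be equalities. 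Equality in Tao's inequality on $\ZZ_q$ (by the second half of Tao's Theorem, or rather its equality-case corollary) pins down each nonzero row of $f$ up to a scalar; equality in the Donoho–Stark product inequality $|A|\cdot|B|\ge|G|$ applied to a suitable sub-array forces $A$ to be a genuine product set $A_1\times A_2$ and the row scalars to combine multiplicatively, giving $f=g\otimes h$. One then reads off that $g$ is an equality case for $\ZZ_p$, $h$ for $\ZZ_q$, and the size constraint $|A_1|\,|A_2|=k\le q<pq$ forces the smaller factor to be a single point or (on the Fourier side) a single character, i.e.\ a Dirac mass or a character.

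The main obstacle I anticipate is the rigidity step: showing that the support $A$ must actually be a combinatorial rectangle and that the per-row phases assemble into a rank-one tensor, rather than merely obtaining the correct cardinalities. This requires tracking the chain of inequalities carefully and invoking the \emph{uniqueness} statement in Tao's Theorem (given $A$ and $B$ with $|A|+|B|=|G|+1$ there is a unique $f$ up to a constant) in order to rule out any nontrivial "twisting" between rows. A secondary point needing care is handling the borderline values $k=p$ and $k=q$ (the divisor cases), where both a "Dirac-type" and a "character-type" tensor decomposition can occur, so the dichotomy "$g$ or $h$ is a character or a Dirac mass" must be verified to be exhaustive. Once the factorization $f=g\otimes h$ is in hand, the reduction to the known one-dimensional equality cases on $\ZZ_p$ and $\ZZ_q$ (from Tao's Theorem) is routine, and the constraint that one factor be supported on a single point in physical or frequency space follows immediately from $k\notin\{pq\}$ together with $\gcd(p,q)=1$.
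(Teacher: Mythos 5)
Your plan follows the same main mechanism as the paper: compute $\theta(k,\ZZ_q\times\ZZ_p)=\min\{\theta(t,\ZZ_q)\theta(s,\ZZ_p):st\le k\}$, run Meshulam's chain of inequalities on a putative equality case, force every inequality to be an equality so that $(s,t)$ must be a minimizing couple and the support and spectrum are rectangles $S\times T$ and $\widehat S\times\widehat T$, and then identify $f$ with a tensor product. (For that last identification the paper does not use ``equality in Donoho--Stark on a sub-array'', which as you state it is too vague to force rectangularity; it exhibits a tensor product $h_1\otimes h_2$ with the same support and spectrum as $f$ and invokes the lemma that the space of functions with prescribed support $A$, $|A|=k$, and spectrum inside a set of size $\theta(k,G)$ has dimension at most one. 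You should replace your sub-array step by something of this kind.)

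The genuine gap is that your rigidity step silently assumes the minimizing couple is unique, and it is not always unique. The degenerate value is not $k=p$ or $k=q$ (there the minimizer is still unique, since $q(p-q+1)=p$ would force $p=q$), but $k=qr$ when $r=\frac{p+1}{q+1}$ is an integer: then both couples $(1,qr)$ and $(q,r)$ attain the minimum, and indeed $\theta(qr,G)=p-r+1=qr$. In that case the chain of equalities does \emph{not} force a rectangle: all the cardinality constraints are also met by a ``staircase'' configuration $f=\sum_{j\in\ZZ_q}\delta_{\ZZ_q}(\cdot-j)\otimes f_j$ with the $f_j$ supported on pairwise disjoint $r$-element subsets of $\ZZ_p$ (and a spectrum of the same shape), which is not of the form asserted by the theorem, and Tao's uniqueness statement does not apply to it since neither factor carries a pair with $|A|+|B|=|G|+1$. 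Excluding this configuration is the technical core of the paper's proof: one derives the phase relations $\widehat f_j=e^{2\pi i jl_0/q}\,\widehat f_0$ on each spectral block $\widehat T_{l_0}$, packages the partial Fourier maps between blocks into operators $U_l$ and $W$ on $L(T_0)$, proves the resolution of the identity $\sum_{l}U_l^*U_l+W^*W=p\,\mathrm{Id}$, and deduces $pf_0=qpf_0$, a contradiction since $q>1$. Without an argument of this strength for the exceptional $k$, the factorization claim — and hence the theorem — is not established.
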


 We postpone the description of equality cases in the two other cases to the corresponding sections. For these two families one has a complete answer, valid for all values of $k\leq p^2$.

Our results can be summarized as the fact that (except, possibly, for exceptional values for which we were not able to conclude) there are no other equality cases than trivial ones. Unfortunately, even if solutions are simple, proofs are technical and it seems difficult to generalize them to all finite Abelian groups, especially when an arbitrary number of primes $p_j$ is involved.

This paper is a first attempt to show that, even if  Meshulam's Function is  smaller than $k\mapsto|G|-k+1$, there are only a ``small" number of functions such that $|\supp (f)|+|\supp(\widehat f)|\leq |G|$, a phenomenon that is observed in \cite{CRT} in a random setting.

\section{Some preliminary results}

Let us first recall that the function $\theta$ is non increasing. If $\theta (k, G)<\theta (k-1, G)$, then there are equality cases for $(k,G)$. We will see (Remark  \ref{noconverse}, Lemmas \ref{k>q} and \ref{k>p}) that the converse is not true.

\medskip

Next we have the following lemma.
\begin{lemma}\label{dim1} The set $E(k, \theta (k, G))$ is contained in  a finite union of vector spaces of dimension $1$.
\end{lemma}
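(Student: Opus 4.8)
The plan is to show that $E_0(k,\theta(k,G))$, when nonzero, decomposes as a finite union of lines. The natural setting is: fix a set $B\subset\widehat G$ with $|B|=\theta(k,G)$, and consider, for each $A\subset G$ with $|A|=k$, the space $V_{A,B}$ of functions supported in $A$ with spectrum contained in $B$. Since $B$ is minimal, $\widehat f$ cannot be supported in any proper subset of $B$ without forcing $|\supp(\widehat f)|<\theta(k,G)$, which by definition of $\theta$ would force $|\supp(f)|>k$, impossible; so any nonzero $f\in V_{A,B}$ automatically has $\supp(\widehat f)=B$ exactly. Hence $E_0(k,\theta(k,G))$ is contained in the finite union $\bigcup_{A,B} V_{A,B}$ over all such pairs $(A,B)$, and it suffices to prove that each $V_{A,B}$ is either $\{0\}$ or a line.

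First I would observe that $\dim V_{A,B} = k - \rank M_{A,B}$, where $M_{A,B}=(\chi(a))_{\chi\notin B,\,a\in A}$ is the submatrix of the Fourier matrix indexed by rows outside $B$ and columns in $A$: indeed $f$ supported in $A$ lies in $V_{A,B}$ iff $\widehat f(\chi)=0$ for all $\chi\notin B$, i.e. iff the vector $(f(a))_{a\in A}$ lies in $\ker M_{A,B}$. Now $M_{A,B}$ has $|G|-\theta(k,G)$ rows and $k$ columns. The key inequality is $|G|-\theta(k,G)\geq k-1$, which is exactly the crude Donoho--Stark-type bound $\theta(k,G)\leq |G|-k+1$ quoted in the introduction. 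Thus $M_{A,B}$ has at least $k-1$ rows.

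Next, the heart of the argument: I claim $\rank M_{A,B}\geq k-1$, i.e. $\dim V_{A,B}\leq 1$. Suppose not, so that $\rank M_{A,B}\leq k-2$. Then one can delete one row of $M_{A,B}$ and still have rank $\leq k-2$; equivalently, enlarging $B$ to $B'=B\cup\{\chi_0\}$ for a suitable $\chi_0\notin B$, the matrix $M_{A,B'}$ still has rank $\leq k-2$, so $\dim V_{A,B'}\geq 2 \geq 1$. This exhibits a nonzero function supported in $A$ (so of support size $\leq k$) with spectrum in $B'$, hence of spectrum size $\leq |B'| = \theta(k,G)+1$. That alone is not yet a contradiction, so one must work slightly harder: instead iterate. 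Whenever $\dim V_{A,B}\geq 2$, the same reasoning (removing one constraint at a time) produces, for every $j\geq 0$, a set $B^{(j)}$ of size $\theta(k,G)-j$ with $V_{A,B^{(j)}}\neq\{0\}$, as long as $\dim V_{A,B}\geq j+1$; taking $j$ with $\theta(k,G)-j$ as small as the dimension count allows yields a nonzero function with support $\leq k$ and spectrum strictly smaller than $\theta(k,G)$, contradicting the very definition \eqref{theta} of Meshulam's function. So $\dim V_{A,B}\leq 1$ for every admissible pair $(A,B)$, and since there are finitely many such pairs, $E_0(k,\theta(k,G))$ is a finite union of lines (the ones with $\dim V_{A,B}=1$) together with possibly $\{0\}$.

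The main obstacle I anticipate is the bookkeeping in the last step: passing from ``$\dim V_{A,B}\geq 2$'' to ``there is a nonzero function with spectrum strictly below $\theta(k,G)$'' requires care, because removing a row from $M_{A,B}$ drops the rank by at most one but need not drop it at all, so one must argue on dimensions of kernels rather than on ranks directly, and track that the support stays $\leq k$ (it automatically does, since we never change $A$). A clean way is: if $\dim\ker M_{A,B}=d\geq 2$, then for any superset $B\subset C\subset\widehat G$ the kernel of $M_{A,C}$ (fewer rows) has dimension $\geq d-(|C|-|B|)\geq d-(|C|-\theta(k,G))$; choosing $C$ with $|C|=\theta(k,G)+d-1$ gives $\dim\ker M_{A,C}\geq 1$, i.e. a nonzero $f$ with $|\supp f|\leq k$ and $|\supp\widehat f|\leq |C|$. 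To contradict minimality of $\theta$ I then need $|C|<\theta(k,G)$, i.e. $d-1<0$, which fails. So in fact the iteration must go the other way — shrinking $B$, not enlarging it — and shrinking $B$ to $B'\subsetneq B$ means \emph{adding} rows to $M$, which can only increase the rank; the correct statement is that if $\dim\ker M_{A,B}\geq 2$ then some one-element deletion $B'=B\setminus\{\chi_1\}$ still has $\dim\ker M_{A,B'}\geq 1$ (adding one row drops the kernel dimension by at most one). That gives a nonzero $f$ with support $\leq k$ and spectrum $\subseteq B'$, so $|\supp\widehat f|\leq\theta(k,G)-1$, the desired contradiction with \eqref{theta}. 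I expect the write-up to spend most of its length making exactly this ``deleting one character from $B$ costs at most one dimension'' step precise.
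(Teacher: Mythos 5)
Your final argument is correct and is essentially the paper's own proof: the paper takes two linearly independent functions in $E(A,B)$ and forms a nonzero linear combination whose Fourier transform vanishes at one more point $b\in B$, which is exactly your ``deleting one character from $B$ adds one row to $M_{A,B}$ and costs at most one kernel dimension'' step, yielding a nonzero function with support $\le k$ and spectrum of size $\le\theta(k,G)-1$, contradicting \eqref{theta}. The initial detour through enlarging $B$ was a false start, but you correctly identified and repaired it, so the write-up as it stands reaches the same proof as the paper.
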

\begin{proof}
   The set $E(k, \theta (k, G))$ is  contained in the union of $E(A, B)$, where $A$ and $B$ are respectively subsets of $G$ and $\widehat G$,  verify $|A|\leq k$, $|B|=\theta (k,G)$, and
 \begin{equation}\label{Eab}
  E(A,B):=\Big\{f \in L(G); \  \supp(f) \subset A , \; \supp(\widehat{f}) \subset B  \Big\}.
\end{equation}
 Assume $E(A,B)$ is of dimension $\geq 2$. Then we can find $f$ and $g$ two linearly independent functions in $E(A,B)$ and there exists
 a nonzero linear combination of  $f$ and $g$ whose Fourier transform vanishes at some $b\in B$. This implies that  $\theta(k,G)\leq l-1$.
\end{proof}
Note that all equality cases are (non explicitly) known as soon as we  know all subsets $A$ and $B$ for which the space of functions with support in $A$ and spectrum in $B$  is not reduced to $\{0\}$.
\begin{lemma}\label{dim1b} For $|A|=k$ and $|B|=|G|-k+1$, the space  $E(A, B)$ is not reduced to $\{0\}$. As a consequence, $\theta(k, G)\leq |G|-k+1$.
\end{lemma}
\begin{proof} The function $f=\sum_{x\in A} c(x)\mathds{1}_{\{x\}}$ belongs to $E(A,B)$ if the $k$ coefficients $c(x)$ satisfy the $k-1$ linear equations given by $\widehat f (y)=0$ for $y\notin B$. There is at least one nonzero solution to this system.
\end{proof}
There is no equality in general between $\theta(k, G)$ and $|G|-k+1$ for $k\neq 1, |G|$, except for $|G|$  a prime number or when  $ k=|G|-1$. This is described in the next proposition, as well as the values of $k$ for which $\theta(k, G)=2$.
\begin{proposition}\label{caseminus1} Let $G$ be a finite Abelian group $G$ such that $|G|$ is not prime. Let $1<d_1<\cdots<d_r<|G|$ be the divisors of $|G|$. Then \begin{itemize}
\item [(i)]
$\theta (k, G)=2$ if and only if $|G|-d_r\leq k \leq |G|-1$.
\item [(ii)]Moreover, there are equality cases for $(|G|-1, G)$ if and only if the group $G$ is cyclic.
\item [(iii)] One has the inequality $\theta (k, G)\leq |G |-k$ for $1<k<|G|-1$.
\end{itemize}
\end{proposition}
\begin{proof} Let us first prove that $\theta (|G|-1, G)\geq 2$. Indeed, when the Fourier transform of $f$ is a Dirac mass, then $f$ is a character and its  support  has cardinality $|G|$.

Let us next consider values of $k$ for which  $\theta (k, G)=2$. We first compute the cardinality of the support of all functions $f$ that vanish at one point at least and  such that $\widehat f$ is a linear combination of two Dirac masses with nonzero coefficients.
Up to translation, modulation and multiplication by a constant, we can assume that $ f:=1-\chi$, with $\chi\in \widehat G$ a non-principal character.
Then $f$ is supported by the set of all $x\in G
$ such that $\chi(x)\neq 1$.
The complement of $\supp(f)$ is a subgroup $H$ of $G$, so that the only  possible values for $|\supp(f)|$ are $|G|-d_r, \ldots ,|G|-d_1,|G|-1$. If $k<|G|-d_r$, there does not exist any non-principal character $\chi$ that takes the value $1$ on a set of cardinality larger than $|G|-k$. So $\theta (G, k)>2$.

 Let us prove that $\theta (|G|-d_r, G)=2$. There exists a subgroup $H$ of cardinality $d_r$. The group $G/H$ has cardinality $|G|/d_r$, which is a prime number. So $G/H$ is a cyclic group and any non-principal character $\chi_0$ on $G/H$ takes the value $1$ at the neutral element $0$ only. It extends into a character $\chi$ on $G$ by the formula $\chi(x):=\chi_0(\dot{x})$, where $\dot{x}$ is the equivalence class of $x$ mod $H$. The character $\chi$ has the property that it takes the value $1$ exactly on $H$. The corresponding function $ f:=1-\chi$ belongs to $E_0(|G|-d_r, 2)$. So $\theta (|G|-d_r, G)\leq 2$. By monotonicity, for $|G|-d_r\leq k \leq |G|-1$ we have $2\leq\theta (|G|-1, G)\leq \theta (k, G)\leq \theta (|G|-d_r, G)\leq 2$.   Therefore,(i) follows at once.

To see (ii), it is sufficient to prove that there exists a character $\chi$ such that $\chi(x)\neq 1$ for $x\neq 0$ if and only if $G$ is cyclic. Let us first assume that $G$ is cyclic, that is, $G=\ZZ_n$ for some positive integer $n$. Then the character that identifies with $1$ by \eqref {character} has the required property. Conversely, if $\chi$ is such a character, it takes $|G|$ different values since $\chi(a)\chi(b)^{-1}=\chi(a-b)\neq 1$ for $a\neq b$.  It follows that that the
range of $\chi$ consists of all order $|G|$ roots of unity,
whence $\chi^m$ is different from the principal character $1$ for $m<|G |$, while $\chi^{|G|}=1$. This implies that the character $\chi$ generates the whole group $\widehat G$. So $\widehat G$, equivalently $G$, is cyclic.

    Let us now prove (iii). We have just proved that the inequality is satisfied for $|G|-d_r\leq k<|G|-1$.  For $d_r\leq k\leq |G|-d_r$, we use the inequalities $\theta(k, G)\leq \theta(d_r, G)=d_1\leq |G|-d_r\leq |G|-k$. The first inequality is a consequence of monotonicity. Then the equality comes from equality cases in Donoho-Stark's Uncertainty Principle. We next use the fact that $d_1+d_r\leq 2d_r\leq d_1d_r= |G|$.
 It remains to consider $2\leq k<d_r$. This follows from the fact that $\theta (k, G)\leq\theta (2, G)=|G|-d_r$.
\end{proof}
\begin{remark}\label{noconverse} So there are equality cases for $(|G |-1, G)$ when G is the group $\ZZ_p\times \ZZ_q$,
with $p, q$ two distinct prime numbers,  or when $G$ is the group $\ZZ_{p^2}$. This proves that for $k=|G|-1$ one may have simultaneously the equality $\theta(k, G)= \theta(k-1, G)$ and the existence of  equality cases for $(k, G)$. Note that in the group $\ZZ_p\times \ZZ_p$ every element generates a proper subgroup and there is no equality case for $k=p^2-1$.\end{remark}
\begin{remark}\label{onlycase} For the same reasons as above, there are  equality cases for $(|G |-d_j, G)$ if and only if there exists a character $\chi$ such that the subgroup $\{x\in G\ ; \chi(x)=1\}$ has cardinality $d_j$. In particular, when G is the group $\ZZ_p\times \ZZ_q$ with $p>q$, there are equality cases for $((p-1)q, G)$. These equality cases can be written as $f\otimes \xi$, with $f$ an equality case for $(p-1, \ZZ_p)$ and
 $\xi$ a character on $\ZZ_q$.
\end{remark}

The next lemma allows to exchange the role of $f$ and $\widehat f$.
\begin{lemma}\label{reciproque}
Assume that $ \theta(k,G)< \theta(k-1,G)$. Then
$$\theta (\theta (k, G),\widehat G)=k.$$ Moreover, $f$ is an equality case for $(k, G)$ if and only if its Fourier transform is an equality case for $(\theta (k, G), \widehat G)$.

\end{lemma}
The proof is elementary and we leave it to the reader.

\medskip

We next give all equality cases for a product with a supplementary assumption.

\begin{proposition}\label{product} Let $G=G_1\times G_2$ and $1\leq k\leq |G|$. Then
\begin{equation}\label{expression-theta}
\theta(k, G)=\min \{\theta (k_1, G_1)\theta(k_2, G_2) ;\; k_1 k_2\leq k, \ \ 1\leq k_i\leq |G_i|,\; i=1,2  \}.
\end{equation}
Assume that $(k_1, k_2)$ is the only pair for which $k_1k_2\leq k$ and
\begin{equation}\label{equal}
\theta (k, G)=\theta (k_1, G_1)\theta(k_2, G_2).
\end{equation} Then there are equality cases for $(k,G)$ if and only if  $k=k_1k_2$
and there are equality cases for $(k_i,G_i)$, $i=1, 2$. Moreover, all equality cases for $(k,G)$  may be written as $f_1(x_1)f_2(x_2)$, with $f_i$ an equality case for $(k_i,G_i)$, $i=1, 2$.
\end{proposition}
\begin{proof}
It is inspired by Meshulam's paper, who has proved  the first statement.  Let $f$ be a nonzero function with support of size $\leq k$ and spectrum of size $\theta(k, G)$. For $\chi(x)=\chi_1(x_1)\chi_2(x_2)$ a character, that is, an element of $\widehat G=\widehat {G_1}\times \widehat {G_2}$, we write
\begin{eqnarray*}
\widehat f(\chi_1, \chi_2)&=&\sum_{x_1\in G_1} \sum_{x_2\in G_2}f(x_1,x_2)\chi_1(-x_1)\chi_2(-x_2)\\
&=&\widehat F_{\chi_1} (\chi_2).
\end{eqnarray*}
 Here
$$F_{\chi_1} (y)=\sum_{x_1\in G_1} f(x_1,y)\chi_1(-x_1)= \widehat{f_y}(\chi_1),$$
if we put $f_y(x_1):=f(x_1,y)$ for $y\in G_2$. Then
$$|\supp (\widehat f(\chi_1, \cdot))|\geq \theta (|\supp (F_{\chi_1})|, G_2)$$ when $F_{\chi_1}\neq \textbf{0}$.
Let us denote  $$\widehat S :=\{\xi\in \widehat {G_1}; \; F_\xi\neq \textbf{0}\}= \{\xi\in \widehat {G_1}; \; \widehat f(\xi, \cdot)\neq \textbf{0}\}.$$ Then we have
\begin{equation}\label{equ1}
    |\supp (\widehat f(\cdot, \cdot))|\geq |\widehat S|\min_{\xi \in \widehat S}\theta (|\supp (F_\xi)|, G_2).
\end{equation}
Now take for $t$ the size of $T:=\{y;\; f_y\neq \textbf{0}\}$. The support of $F_\xi$ is contained in $T$ for all $\xi\in \widehat{G_1}$, so that,
 for $\xi\in \widehat S$, we have
\begin{equation}\label{equ2}
 \theta (|\supp (F_\xi)|, G_2)\geq \theta (t, G_2).
\end{equation}
We also have
\begin{equation}\label{equ3}
|\widehat S|\geq |\cup_{y\in T}\{\xi;\; F_\xi(y)=\widehat {f_{y}}(\xi)\neq 0\}|\geq \theta(s,G_1)
\end{equation}
for $s$ the smallest size for the support of $f_y$.  We finally remark that $st\leq k$. So we conclude from \eqref{equ1}, \eqref{equ2}, \eqref{equ3} that
\begin{equation}\label{equ4}
\theta (k,G)\geq \theta (s,G_1)\theta(t, G_2).
\end{equation}
We have proved that
$$\theta (k, G)\geq\min \{\theta (k_1, G_1)\theta(k_2, G_2) ;\; k_1 k_2\leq k, \ \ 1\leq k_i\leq |G_i|,\; i=1,2  \}.$$
Next we prove that there is equality in this inequality. Assume that the minimum is obtained for $k_1, k_2$.
 Let $f_1\in L(G_1)$ and $f_2\in L(G_2)$ such that $ |\supp (f_i)|\leq k_i$ and
 $ |\supp (\widehat f_i)|=\theta (k_i,G_i)$ for $i=1, 2$. Then $f_1\otimes f_2$ has support of size $\leq k_1k_2\leq k$ and its spectrum has size
$\theta (k_1,G_1)\theta(k_2, G_2)= \theta(k, G)$.

Next, assume that $(k_1, k_2)$ is the only pair for which $k_1k_2\leq k$ and \eqref{equal} is valid.
 Let us characterize the values $k$ for which we have equality. Assume that there is some equality case $f$ for $(k, G)$.
 If we proceed as above, the inequality \eqref{equ4} is an equality and the minimum is obtained for $(s, t)$, which coincides with $(k_1, k_2)$.
 Thus inequalities \eqref{equ3}, \eqref{equ2} and \eqref{equ1} are also equalities. Looking at the definition of $T$ and $\widehat S$, it is easily seen  that $T$ is the projection of the support of $f$ on $G_2$ while $\widehat S$ is the projection of the support of $\widehat f$ on $\widehat G_1$. So  $t$ is the size of the projection $T$ of $\supp (f)$ on $G_2$, while $\theta (s, G_1)$ is the size of the projection $\widehat S$ of $\supp (\widehat f)$ on $\widehat G_1$. Exchanging the role of $G_1$ and $G_2$, we define as well $S$ and $\widehat T$, which are respectively of size $s$ and $\theta (t, G_2)$. In particular, the size of $\supp (f)$, which is contained in $S\times T$, is at most $st$. This proves that $k=st$ and the support of $f$ is exactly $S\times T$. Similarly the support of $\widehat f$ is exactly $\widehat S \times \widehat T$. Moreover,  each $f_y$ has the same support $S$ and the same spectrum  $\widehat S$. It is in particular  an equality case for $(s, G_1)$. By symmetry, there are also equality cases for $(t, G_2)$, with support $T$ and spectrum $\widehat T$. More precisely, there exists some function $h_1$ on $G_1$ (resp. $h_2$ on $G_2$) with support $S$ and spectrum $\widehat S$ (resp. $T$ and $\widehat T$).  Then $h_1\otimes h_2$ is an equality case for $(st, G)$, with support $S\times T$ and spectrum $\widehat S \times \widehat T$. So, both $f$ and $h_1\otimes h_2$ belong to $E( S \times  T, \widehat S \times \widehat T)$. By Lemma \ref{dim1}, we have $f=ch_1\otimes h_2$. We have proved that $f$ can be written as a tensor product.

This finishes the proof of the proposition.
\end{proof}

\section{The case of groups $\ZZ_{p}\times \ZZ_q$, with $q < p$ prime numbers}
 Let us first give Meshulam's Function, which one can already find in \cite{DVB2}. We give the proof, nevertheless, since we want to know when there is uniqueness of the minimum.
 \begin{proposition}\label{meshtwoprimes}
Let $G=\ZZ_{p}\times \ZZ_q$, with $p$ and $q$ prime numbers such that $1<q<p$. Then

$\theta(k,G)=
\begin{cases}
p(q-k+1),& 1\leq k \leq q;\\
 p - [\frac{k}{q}]+1,& q\leq k \leq q\,\frac {p+1}{q+1};\\
q(p-k+1),&q\,\frac {p+1}{q+1}\leq k \leq p;\\
q-[\frac{k}{p}]+1,& p\leq k \leq pq.
 \end{cases}$
 \end{proposition}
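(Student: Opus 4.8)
The plan is to compute $\theta(k,G)$ for $G = \ZZ_q \times \ZZ_p$ by combining the product formula of Proposition \ref{product} with the known value of Meshulam's function on the cyclic groups $\ZZ_p$ and $\ZZ_q$ of prime order. By Tao's Theorem, $\theta(j, \ZZ_p) = p - j + 1$ for $1 \le j \le p$, and likewise $\theta(i, \ZZ_q) = q - i + 1$ for $1 \le i \le q$. Proposition \ref{product} gives
\[
\theta(k, G) = \min\{\, \theta(k_1, \ZZ_q)\,\theta(k_2, \ZZ_p) \ : \ k_1 k_2 \le k,\ 1 \le k_1 \le q,\ 1 \le k_2 \le p \,\},
\]
so the task reduces to minimizing the explicit function $(k_1, k_2) \mapsto (q - k_1 + 1)(p - k_2 + 1)$ over the integer lattice region $k_1 k_2 \le k$ with the stated box constraints. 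First I would observe that for fixed $k_1$ the optimal $k_2$ is as large as allowed, namely $k_2 = \min(p, \lfloor k/k_1 \rfloor)$, since the second factor is decreasing in $k_2$; this collapses the problem to a one-variable minimization over $k_1 \in \{1, \dots, q\}$.

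Next I would split into the four ranges of $k$ appearing in the statement and, in each, identify the minimizing $k_1$. For $1 \le k \le q$: here $\lfloor k/k_1\rfloor$ forces $k_2 = 1$ unless $k_1$ is small; taking $k_1 = k$, $k_2 = 1$ gives $p(q - k + 1)$, and one checks no other choice beats it (any $k_1 < k$ makes the first factor larger while $k_2$ can reach at most $\lfloor k/k_1 \rfloor$, and the product $p(q-k_1+1)$ versus $(q-k_1+1)(p - \lfloor k/k_1\rfloor + 1)$ comparison goes the right way). For $q \le k \le q\frac{p+1}{q+1}$: the minimum is attained at $k_1 = q$ (so the first factor is $1$), giving $\theta = p - \lfloor k/q \rfloor + 1$; I would verify that reducing $k_1$ to $q-1$ gives $2(p - \lfloor k/(q-1)\rfloor + 1)$, which is $\ge p - \lfloor k/q\rfloor + 1$ precisely in this range — this is where the threshold $q\frac{p+1}{q+1}$ comes from, by solving the borderline inequality $2(p - k/(q-1) + 1) \ge p - k/q + 1$. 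Symmetrically, for $q\frac{p+1}{q+1} \le k \le p$ the optimum flips to $k_2 = p$, $k_1 = \lfloor k/p \rfloor = 1$... wait — more precisely $k_1$ small and $k_2 = \lfloor k/k_1\rfloor$, and the minimizing configuration is $k_1 = 1$, $k_2 = k$ giving $q(p - k + 1)$; and for $p \le k \le pq$, the same reasoning with $k_2 = p$ fixed yields $q - \lfloor k/p \rfloor + 1$.

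The main obstacle I anticipate is the careful lattice-point bookkeeping at the two interior transition points $k = q$ and $k = q\frac{p+1}{q+1}$, where two different branches of the minimum coincide and where I must also track \emph{uniqueness} of the minimizing couple $(k_1, k_2)$ — this uniqueness is what the later equality-case analysis (via Proposition \ref{product}) needs, so I would be explicit about it: the minimum is attained by a unique couple except exactly at these transition values of $k$ and at divisor-type coincidences, where two couples tie. Concretely, at $k = q$ both $(q, 1)$ and $(1, \lfloor q/1 \rfloor)$-type candidates must be compared and shown to give strictly larger values than the claimed one except in the tie case, and similarly at the upper threshold. The floor functions make the inequalities piecewise, so I would phrase the comparisons as: for $q < k < q\frac{p+1}{q+1}$ strictly, show $(q-k_1+1)(p - \lfloor k/k_1\rfloor + 1) > p - \lfloor k/q\rfloor + 1$ for all $k_1 < q$, reducing by monotonicity to the single competitor $k_1 = q - 1$. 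Once these four cases and their boundary overlaps are settled, the formula follows, and the uniqueness statement is read off from which comparisons are strict.
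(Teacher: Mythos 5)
Your starting point --- the product formula of Proposition \ref{product} combined with Tao's theorem, reducing everything to minimizing $(q-k_1+1)(p-k_2+1)$ over $k_1k_2\le k$ --- is exactly the paper's, and the reduction to $k_2=\min(p,\lfloor k/k_1\rfloor)$ for fixed $k_1$ is fine. The gap is in the one-variable minimization. The function $k_1\mapsto (q-k_1+1)\bigl(p-\lfloor k/k_1\rfloor+1\bigr)$ is \emph{not} monotone, so your reduction ``by monotonicity to the single competitor $k_1=q-1$'' fails: for $q=3$, $p=7$, $k=5$ the values at $k_1=1,2,3$ are $9$, $12$, $7$. The continuous relaxation $h(k_1)=(q+1-k_1)(p+1-k/k_1)$ is in fact \emph{concave} in $k_1$ (one computes $h''(k_1)=-2(q+1)k/k_1^3<0$), so interior values of $k_1$ are dominated by the endpoints, and the competitor you must beat in the range $q\le k\le q\,\frac{p+1}{q+1}$ is $k_1=1$, i.e.\ the quantity $q(p-k+1)$ --- not $k_1=q-1$. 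Consequently your derivation of the threshold is wrong: solving $2(p-k/(q-1)+1)=p-k/q+1$ gives $k=q(q-1)(p+1)/(q+1)$, which agrees with $q\,\frac{p+1}{q+1}$ only when $q=2$; the true threshold comes from equating $q(p-k+1)$ with $p-\frac kq+1$.

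The missing idea is precisely the paper's: pass to real $(s,t)$, observe that along the tangent line to the hyperbola $st=s_0t_0$ the objective is a concave quadratic, so the minimum over the region $\{st\le k\}$ inside the rectangle $[1,p]\times[1,q]$ is attained on the boundary of the rectangle; this reduces the whole computation to comparing the four edge quantities $p(q-k+1)$, $p-\lfloor k/q\rfloor+1$, $q(p-k+1)$, $q-\lfloor k/p\rfloor+1$, with the interior thresholds arising from pairwise equalities among these. (The passage back to integers is harmless because $p+1-\lfloor k/q\rfloor$ is the least integer $\ge p+1-k/q$, so any integer value attained at an interior $k_1$ is still at least the edge value.) Your four-range case split and your attention to uniqueness of the minimizing couple are the right goals, but without the concavity step the interior values of $k_1$ are not controlled, and with the wrong binding competitor the thresholds do not come out correctly.
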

 \begin{proof} Using Tao's Theorem and Proposition \ref{product}, we know that
$$
\theta(k,G)=\min\{(p-s+1)(q-t+1);\;st\leq k,\; 1\leq s\leq p,\; 1\leq t \leq q\} .
$$
Let us first consider the function $F(s,t):=(p-s+1)(q-t+1)$ with real variables $s, t$. Let $R$ be the rectangle defined by $1\leq s\leq p$, $1\leq t\leq q$
and let  $\Delta:=\Delta_k$ be the region in $R$ such that $st\leq k$. We first look at the minimum of $F$ on $\Delta$.   Because of the concavity of $F$ its minimum cannot be attained inside the domain $\Delta$.  Moreover, the function is a concave function of $s$ on the hyperbola $st=k$, so that it does not attain its minimum in the interior part of the hyperbola, but on its boundary. It decreases on the common boundary with  $R$ when $s$ or $t$ increases, so that the minimum is obtained on the intersection of the hyperbola with the boundary of $R$, which consists of two points (we assume that $k$ is different from  $1$ or $pq$, where the conclusion is immediate). We have to consider separately four cases. In the following table, we give for each of them the values of the minimum, followed by the two points of intersection:
$$\min_{\Delta} F(s,t)=\begin{cases}
p(q-k+1)\quad  \mbox {\rm obtained in}\; (1, k)\; \mbox {\rm or} \;(k, 1),\quad & 1\leq k \leq q;\\
 p - \frac{k}{q}+1\quad \mbox {\rm obtained in}\; (\frac kq, q)\; \mbox {\rm or}\; (k, 1),\quad & q\leq k \leq q\,\frac {p+1}{q+1};\\
q(p-k+1)\quad \mbox {\rm obtained in}\; (\frac kq, q)\; \mbox {\rm or} \;(k, 1),\quad &q\,\frac {p+1}{q+1}\leq k \leq p;\\
q-\frac{k}{p}+1\quad \mbox {\rm obtained in}\; (\frac kq, q)\; \mbox {\rm or} \;(p, \frac kp),\quad & p\leq k \leq pq.
 \end{cases}
 $$
 Let us remark that the minimum is obtained at exactly one point, except when $k= q\,\frac {p+1}{q+1}$. We give also the other point  because it will play a role later on.

 Let us now consider the  minimum on the integer values for $s$ and $t$, that is, $\theta(k, G)=\min_{\Delta\cap (\NN\times \NN)} F(s,t)$. When the minimum on  the whole $\Delta$ is obtained for integer values of $s$ and $t$, it is also the minimum on $\Delta\cap (\NN\times \NN$). This allows to conclude for the first and the third case.

 Let us concentrate on the second case, for which the minimum on $\Delta$ is obtained at $(\frac kq, q)$.  It coincides with the minimum on
 $\Delta\cap (\NN\times \NN)$ when $k$ is a multiple of $q$. It remains for this second case to consider values of $k$ such that $k$ is not a multiple
 of $q$, with $q<k< q\,\frac {p+1}{q+1}$. Then the minimum on $\Delta$ is not an integer and cannot be attained on $\Delta\cap (\NN\times \NN$).
 So the minimum on $\Delta\cap (\NN\times \NN)$ is at least  the smallest integer that is larger than $\min_{\Delta} F(s,t)$, that is, $p - [\frac{k}{q}]+1$. It remains to see that this value is really attained, which is the case at the point $([\frac kq], q)$. So $\theta(k, G)=p - [\frac{k}{q}]+1$.

 The same argument allows to conclude for the fourth case as well.
\end{proof}

In view of the use of Proposition \ref{product} we try to answer the following question. Is there uniqueness of the pair of integers $(s, t)$ for which the minimum is obtained? Let us give the following definition.
\begin{definition}
We call $\mathfrak{M}$ the set of integers $k$ such that $1< k< pq-1$ and such that the minimum of $F$ over $\Delta_k\cap (\NN \times \NN)$ is obtained at exactly one point $(s, t)$.
\end{definition}

Clearly $k$ belongs to $\mathfrak{M}$ when the minimum  of $F$ over $\Delta_k\cap (\NN \times \NN)$ coincides with the minimum over all $\Delta$ and when there is uniqueness for this last one. This excludes from $\mathfrak M$ the integer  $k= q\frac {p+1}{q+1}$ when $p+1$ is a multiple of $q+1$. But we directly conclude that all $k\leq q$, as well as all $k$ such that $q\frac {p+1}{q+1}< k \leq p$ belong to $\mathfrak M$. In the two remaining intervals, multiples of $q$ when $q<k< q\frac {p+1}{q+1}$ and multiples of $p$ when $p<k<pq$ belong to $\mathfrak M$. The next two lemmas  give the complete description of $\mathfrak M$.

\begin{lemma}\label{k>q} When $q<k < q\,\frac {p+1}{q+1}$, the minimum of $F$ on $\Delta_k\cap (\NN \times \NN)$ is attained at one point exactly except perhaps for one exceptional value. More precisely, there exists a value of $k$ for which the minimum is attained at two points if and only if $p+1\equiv q$ mod $q+1$.  In this case, the exceptional value is $k=q\left[\frac {p+1}{q+1}\right]+q-1$, for which  the minimum is attained at $([\frac kq], q)$ and $(k,1)$. For this particular value of $k$, there are equality cases of the form $f\otimes \delta_{\ZZ_q}(\cdot-b) $,
 with $b\in\ZZ_q$ and $f\in L(\ZZ_p)$ such that $|\supp(f)|=k$ and $|\supp(\widehat f)|=p-k+1$.
\end{lemma}
 \begin{proof}
The minimum $p-[\frac kq]+1$ is attained at only one point $(s,t)=([\frac kq], q)$ on the line $t=q$. Assume that it is attained at some other point $(s', t')$. Then $t'\leq q-1$. For the same reasons as before this can only occur for $t'=1$ or $t'=q-1$, that is, at one of the points $(k, 1)$ and $([\frac k {q-1}], q-1)$. But $\theta (k, G)$ is attained at the point $(k,1)$ if and only if $q(p-k+1)=p-[\frac kq]+1$. If we write $k=qj+r$, with $0\leq r<q$, this means that $qr=(q-1)(p-(q+1)j+1)$, which implies that $r=q-1$. Moreover, $p+1=(q+1)j+q$, so that $p+1\equiv q$ mod $q+1$, and $j=\left[\frac {p+1}{q+1}\right]$. So $k=q\left[\frac {p+1}{q+1}\right]+q-1=q\frac {p+1}{q+1}+q-1$.

 For $q=2$ this concludes the proof. For $q>2$ it suffices to show that  $F(\dfrac{k}{q-1},q-1)>
F\left(\left[\dfrac{k}{q}\right]\right)$, that is, $p+1> 2\left[\dfrac{k}{q-1}\right]-\left[\dfrac{k}{q}\right]$.
This follows from the inequalities $2\left[\dfrac{k}{q-1}\right]-\left[\dfrac{k}{q}\right]-1\leq \dfrac{k}{q-1}<\dfrac{q(p+1)}{q^2-1}<p$. We have used the assumption on $k$ and the fact that $q>2$.

 Finally, it is easy to see that functions $f\otimes \delta_{\ZZ_q}(\cdot-b)$ are equality cases.
\end{proof}

\begin{lemma}\label{k>p}
Let $k=jp+r$, with $1\leq j\leq q-1$ and $1\leq r\leq p-1$. Then $k$ belongs to $\mathfrak M$ if and only if $r<(p-q)(q-j)$. When $k\notin \mathfrak{M}$, then the minimum is  attained at the points $(p, j)$ and $([\frac kq], q)$. If, moreover, $k=lq$, then the minimum is attained at $(l, q)$ if and only if $(p-l)(p-q)<p$ and, in this particular case, there are equality cases of the form $ f\otimes\chi $, with $\chi$ a character of $\ZZ_q$ and $f\in L(\ZZ_p)$ such that $|\supp(f)|=l$ and $|\supp(\widehat f)|=p-l+1$.
\end{lemma}
\begin{proof}
 We  have  $\theta (k, G)=q-j+1 =ab$, with $p+1-s=a$ and $q+1-t=b$. Moreover, we have $st\leq k<(j+1)p$, that is,
$$(p+1-a)(q+1-b)<p(q+2-ab).$$
This implies that $a<1+\frac p{(p+1)(b-q-1)}$ so that either $a=1$, or $(p+1)b\leq q+1+p$, which can only happen when $b=1$. The case $a=1$ corresponds to the pair $(p,[\frac kp])$) where we already know that $\theta(k, G)$ is attained, while the case $b=1$  corresponds to the pair $([\frac kq], q)$. So either the minimum is attained at only one point or it is  attained at exactly two points. We finally find that it is also attained at the second point $([\frac kq], q)$ if and only if $p-\frac kq \leq q-j$, that is, $(p-q)(q-j)\leq r$. The particular case $k=lq$ is obtained from elementary computations.
\end{proof}

We do not know whether the equality cases that we have described in the two lemmas \ref{k>q} and \ref{k>p}  are the only ones. We do not know either how to describe all equality cases when the minimum is attained at two points.

Lemmas \ref{k>q} and \ref{k>p} and the comments above may be summarized in the following statement.
\begin{proposition}
\begin{itemize}
 \item [(i)]
 All $k\leq q$ belong to $\mathfrak{M}$.
 \item [(ii)]
All integers $k$ such that $q< k\leq q\frac{p+1}{q+1}$ belong to $\mathfrak{M}$ except possibly one exceptional value of $k$. If $p+1$ is not congruent to $0$ or $-1$ mod $q+1$ there is no exceptional value of $k$ in this interval.
\item [(iii)]
 All $k$ such that $q\frac{p+1}{q+1}<k\leq p$ belong to $\mathfrak{M}$.
 \item [(iv)]
 An integer $ k=jp+r\leq pq-1$  such that $1\leq j\leq (q-1)$ and $0\leq r\leq q-1$ belongs to $\mathfrak M$ if and only if $r<(p-q)(q-j)$.
\end{itemize}
\end{proposition}

We will be able to give a complete description of equality cases for $k\in \mathfrak M$.
We have seen earlier that one can conclude for equality cases for  $k\geq p(q-1)$, for which $\theta (k,G)=2$.  Namely, there are equality cases only when $k=pq-1$ and $k= q(p-1)$ (see Proposition \ref{caseminus1} and Remark \ref{noconverse}).

\begin{theorem}\label{th-pq}
Let $G=\ZZ_{p}\times \ZZ_q$, with $p$ and $q$  prime numbers such that $1<q<p$ and $k\in \mathfrak M$. Then we have the following.
\begin{itemize}
\item [(i)]
  For $k\leq q$,  equality cases are of the form $\delta_{\ZZ_p}(\cdot -a)\otimes f$, with $a\in\ZZ_p$ and $f\in L(\ZZ_q)$ such that
  $|\supp(f)|=k$ and $|\supp(\widehat f)|=q-k+1$.
\item [(ii)]
 Let  $k$ be such that $q\leq k<q\frac{p+1}{q+1}$. There exist equality cases for $(k, G)$ if and only if $k$ is divisible by $q$. When $k=qj$ with $1\leq j<\frac{p+1}{q+1}$, equality cases are of the
 form $ f\otimes\chi $, with $\chi$ a character of $\ZZ_q$ and $f\in L(\ZZ_p)$ such that $|\supp(f)|=j$ and $|\supp(\widehat f)|=p-j+1$.
 \item [(iii)]
For $k$ such that $q\frac{p+1}{q+1}< k\leq p$,  equality cases are of the form $f\otimes \delta_{\ZZ_q}(\cdot-b) $, with $b\in\ZZ_q$ and $f\in L(\ZZ_p)$ such that
$|\supp(f)|=k$ and $|\supp(\widehat f)|=p-k+1$.
\item [(iv)]
   Let  $k$ be such that $p\leq k<pq-1$. There exist equality cases for $(k, G)$  if and only if $k$ is divisible by $p$. When $k=pj$ with  $1\leq j\leq q-1$, then equality cases are of the form $ \chi\otimes f$,
 with $\chi$ a character of $\ZZ_p$ and $f\in L(\ZZ_q)$ such that $|\supp(f)|=j$ and $|\supp(\widehat f)|=q-j+1$.
\end{itemize}
For all other values $k\in \mathfrak M$, there are no equality cases.
\end{theorem}
\begin{proof}
 Proposition \ref{product} allows to conclude directly.
 \end{proof}

 Let us add some remarks.

 \begin{remark} For values of $k$ that do not belong to $\mathfrak M$ there may be equality cases as described in Lemmas \ref{k>q} and \ref{k>p}. We do not know whether they are the only ones, except when $k>p(q-1)$. Let us recall that for $k>p(q-1)$,  $q(p-1)$ is the only value for which there are equality cases. They are described in Remark \ref{onlycase}.
  \end{remark}
  \begin{remark}
  We have seen that  equality cases for $k=pq-1$ may be written as $c\chi(1-\xi(\cdot -a))$, where $\chi$ and $\xi$ are two characters with $\xi$ that generates $\widehat G$, while $a$ is an element of $G$. Let us note that they are not tensor products. This is the only case for which $\theta (k, G)=|G|-k+1$, which may explain the difference of structure of equality cases.
  \end{remark}
  \begin{remark}
  It may be helpful to give another description of the set of values $k$ for which there are equality cases.
  We define \begin{equation}
\mathfrak{M_0}:=\{k\in (1, pq-1)\ ;\ \theta (k, G)<\theta (k-1, G)\}.
\end{equation}
From the definition of $\theta$ given by \eqref{theta} we  a priori know that there are equality cases for $k\in \mathfrak{M_0}$. Moreover, when using Proposition \ref{product}, we see that the minimum in \eqref{expression-theta}is obtained only for $k=k_1 k_2$, with $\theta (k, G)=\theta (k_1, \ZZ_p) \theta (k_2, \ZZ_q)$. From the expressions of $\theta$ given in Proposition \ref{meshtwoprimes}, it follows that all integers $k\in \mathfrak{M_0}$ belong to $\mathfrak{M}$ except for the exceptional value $q\frac {p+1}{q+1}$ (when it is an integer). On the opposite, it may be deduced from Theorem \ref{th-pq} that  there are no equality cases when $k\in \mathfrak M$ does not belong to $\mathfrak{M_0}$.
\end{remark}

 \section{The case of groups $\ZZ_{p}\times \ZZ_p$, with $ p$ prime}\label{p^2}
The formula for Meshulam's Function is also given in \cite{DVB2}.
\begin{proposition}\label{product-gp}
Let $G=\ZZ_p\times \ZZ_p$, with $p$ a prime number. Then

$\theta(k,G)=
\begin{cases}
p(p-k+1),&1\leq k \leq p;\\
p-[\frac{k}{p}]+1,& p\leq k \leq p^2.
 \end{cases}$
\end{proposition}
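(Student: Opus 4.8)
The plan is to derive this from Tao's Theorem and Proposition \ref{product}, exactly as Proposition \ref{meshtwoprimes} was obtained. Since $\theta(s,\ZZ_p)=p-s+1$ by Tao's Theorem, Proposition \ref{product} applied with both factors equal to $\ZZ_p$ gives
$$\theta(k,G)=\min\{(p-s+1)(p-t+1)\ :\ st\le k,\ 1\le s,t\le p\},$$
the minimum being over integers $s,t$. So the whole statement reduces to this elementary two-variable minimisation.

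First I would relax $s,t$ to real variables and locate the minimum of $\varphi(s,t):=(p-s+1)(p-t+1)$ over the compact region $\Delta:=\{(s,t)\in[1,p]^2:st\le k\}$. At a minimiser one may assume $st=k$, or $s=p$, or $t=p$, since whenever $s<p$ and $st<k$ one can increase $s$ slightly, staying in $\Delta$ and strictly decreasing $\varphi$ (and symmetrically in $t$). On the arc $st=k$ one has $\varphi(s,k/s)=(p+1-s)(p+1-k/s)$, whose derivative in $s$ equals $(p+1)(k/s^2-1)$, so this function first increases and then decreases on the admissible $s$-interval and therefore attains its minimum at one of the two endpoints of that interval, each of which lies on $\partial([1,p]^2)$. (Alternatively, one can run the concavity-on-a-tangent-line argument used in the proof of Proposition \ref{meshtwoprimes}.) Hence it suffices to minimise $\varphi$ on the four edges $s=1$, $t=1$, $s=p$, $t=p$ intersected with $\Delta$.

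On $\{s=1\}$ and $\{t=1\}$ the best value is $p\,(p+1-\min(p,k))$; on $\{s=p\}$ and $\{t=p\}$, which meet $\Delta$ only when $k\ge p$, the best value is $p+1-k/p$. For $1\le k\le p$ this yields $\theta(k,G)=p(p-k+1)$, attained at the integer point $(1,k)$. For $p\le k\le p^2$ the real minimum is $p+1-k/p$; since $\theta(k,G)$ is an integer no smaller than this real minimum, we get $\theta(k,G)\ge\lceil p+1-k/p\rceil=p+1-\lfloor k/p\rfloor$, and this value is attained at the integer point $(p,\lfloor k/p\rfloor)$, which belongs to $\Delta$ because $p\lfloor k/p\rfloor\le k$ and $1\le\lfloor k/p\rfloor\le p$. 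Thus $\theta(k,G)=p-\lfloor k/p\rfloor+1$.

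The only delicate point is this last rounding: the answer is $p-\lfloor k/p\rfloor+1$ and not the integer part of the real minimum, so one has to pass through the ceiling, or, equivalently, exhibit the explicit integer minimiser $(p,\lfloor k/p\rfloor)$. If one prefers to avoid the real relaxation altogether, the identity
$$(p+1-s)(p+1-t)=p(p+1-k)+(p+1)(s-1)(t-1)+p(k-st)$$
makes the lower bound $\theta(k,G)\ge p(p-k+1)$ immediate for $k\le p$, and for $p\le k\le p^2$ one then splits according to whether $\min(s,t)\le\lfloor k/p\rfloor$, the remaining case being settled by combining $\min(s,t)^2\le st\le k<(\lfloor k/p\rfloor+1)p$ with the arithmetic--geometric mean inequality. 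I expect the bookkeeping of this integrality step, rather than any conceptual difficulty, to be the main obstacle.
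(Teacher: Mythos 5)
Your proof is correct and follows essentially the same route as the paper: reduce via Tao's Theorem and Proposition \ref{product} to minimising $(p-s+1)(p-t+1)$ over integer pairs with $st\le k$ in $[1,p]^2$, and show the minimum sits on the boundary of the rectangle (your direct derivative computation along $st=k$ is interchangeable with the paper's tangent-line concavity argument, which you also cite). If anything, you are more careful than the paper on the integrality step, correctly observing that for $k\ge p$ the integer minimum is the \emph{ceiling} $p+1-\lfloor k/p\rfloor$ of the real minimum, attained at $(p,\lfloor k/p\rfloor)$.
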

The proof is the same as for Proposition \ref{meshtwoprimes}. Let us remark that now the minimum is not achieved at one point. For $k<p$ it is attained for both pairs $(1, k)$ and $(k,1)$ (and only there), while, for $k>p$  it is attained for $(p, [\frac{k}{p}])$ and $([\frac{k}{p}],p)$. The same proof as for Lemma \ref{k>p} allows us to prove that there is no other pair for which the minimum is attained.

\medskip

To describe equality cases, we will use the fact that $\ZZ_p\times \ZZ_p$ is a vector space of dimension $2$ over the field $\ZZ_p$. The main difference with the previous case is the fact that there are many proper subgroups of size $p$, namely all proper subgroups generated by one element $m=(m_1, m_2)$, which we write $G_m$. Let us define a scalar product on $\ZZ_p\times \ZZ_p$  (with values in $\ZZ_p)$ by $\langle x, \xi\rangle:= x_1\xi_1+ x_2\xi_2$. Note that  there exists isotropic directions  if and only if $-1$ is a square in $\ZZ_p$  (so, in particular, when $p= 5, 13, 17, \cdots $). In this case, assuming that $p>2$,  the two isotropic directions are given by the vectors $(1, \pm m_0)$, where $m_0$ is such that $-1\equiv m_0^2$ mod $p$.

The orthogonal of $G_m$, which we denote $G_m^{\perp}$, is $G_{\widetilde m}$, with $\widetilde m=(m_2,-m_1)$. If $m$ is non isotropic, then $\ZZ_p\times \ZZ_p$ can also be written as $G_m\times G_{\widetilde m}$. If $m$ is isotropic, then  $m$ and $\widetilde m$ are collinear and
$G_m=G_{\widetilde{m}}$.

Let us   consider the  linear transformation $A(m)$, given for $m\in\ZZ_p\times \ZZ_p$ non isotropic  by the invertible matrix (that we still denote $A(m)$)
\[ A(m):=\left( \begin{array}{cc}
m_1 & -m_2 \\
m_2 & m_1\end{array} \right).\]
Then $A(m)^*=\left( \begin{array}{cc}
m_1 & m_2 \\
-m_2 & m_1\end{array} \right)=A(\overline m)$, with $\overline m = (m_1, -m_2)$,
and \\ $A(m)A(m)^*=(m_1^2+m_2^2)I$. Moreover
$$\langle A(m)x, A(m)\xi\rangle=(m_1^2+m_2^2)\langle x, \xi\rangle.$$ Let us identify $\widehat G$ with $G=\ZZ_p \times \ZZ_p$ by \eqref{character}. With this identification,  matrices $A(m)$ act on characters.   In particular, when $m$ is
non-isotropic, i.e. when $A(m)$ is nonsingular, let us write $g(x):=f(A(m)^{-1} x)$,
so that $g$ is the image of $f$ under the action of $A(m)$. Its Fourier transform is $\widehat g(\xi)= \widehat f(A(m)^{*}\xi)$.
So, when $m$ is non isotropic, the transformation $A(m)$ preserves the sizes of the support and the spectrum of a function, so that the sets $E(k, l)$ and $E_0(k, l)$ are invariant under
the action of $A(m)$.

Here is a simple way to describe all  equality cases, which do not distinguish between  values of $m$. Let us first consider $k<p$.
For each subgroup $G_m$, the following lemma describes  equality cases supported by $G_m$.

\begin{lemma}\label{G_m} Let $m\in \ZZ_p\times \ZZ_p$ and let $f$ be a function supported in $G_m$ and $k<p$. Then $f$ is an equality case for $(k, \ZZ_p\times \ZZ_p)$ if and only if the function $h$ defined on $\ZZ_p$ by  $h(j):=f(jm)$ for $j\in \ZZ_p$ is an equality case for $(k, \ZZ_p)$.
\end{lemma}
\begin{proof}
Let $f$ be an equality case supported on $G_m$ and $h$ as above. Elements in $G_m$ may be written as $jm=(jm_1, jm_2)$, with $j\in \ZZ_p$, where each product has to be interpreted as a product in $\ZZ_p$.

  The Fourier transform of $f$ is given by
$$\widehat f(\xi)=\sum_{j\in \ZZ_p} e^{-2i\pi \frac{j\langle \xi, m\rangle}{p}} h(j).$$
As above in \eqref{character}, we have identified the character $\xi$ with an element of $\ZZ_p\times \ZZ_p$ in this expression.
In particular $\widehat f$ is constant on cosets of $G_m^\perp$. Since $|\supp (\widehat f)|=p(p-k+1)$ by Proposition \ref{product-gp}, the support of $\widehat f$ is the union of $p-k+1$ cosets. Assume that $G_m$ is not generated by $(0, 1)$ (which we can always ensure by exchanging the coordinates if necessary), so that $G_m^\perp$ and $\ZZ_p\times\{0\}$ are not equal. Then $\widehat f(l,0)= \widehat h(m_1l )$ is non zero for exactly $p-k+1$ values of $l$. Since $p$ is prime, multiplication by $m_1$ is a bijection on $\ZZ_p$ and $|\supp (\widehat{h})|=p-k+1$.
\end{proof}
If $m$ is non isotropic then  $f$ is the image under $A(m)$ of  $h\otimes \delta_{\ZZ_p}$.

 We can now state  the theorem, which says that all equality cases can be obtained from these examples. As in the previous case, we have already considered the value $k=|G|-1$ in Remark \ref{noconverse}.
\begin{theorem}
Let $G=\ZZ_p\times \ZZ_p$, with $p$ a prime number.
There are equality cases if and only if $\theta (k, G)<\theta (k-1, G)$. They can be described as follows.
 \begin{itemize}
 \item [(i)]
 For all $k\leq p$,  equality cases  are  translates of equality cases supported by some subgroup $G_m$. In particular, when $m_1^2+m_2^2$ is not equivalent to $0$ mod $p$, they are transforms under the transformation $A(m)$ of a function of the form $f\otimes \delta_{\ZZ_p}(\cdot -a)$,
 with $a\in\ZZ_p$ and $f\in L(\ZZ_p)$ such that $|\supp(f)|=k$ and $|\supp(\widehat f)|=p-k+1$.
\item  [(ii)]
For all $p\leq k \leq  p^2-1$, there are equality cases if and only if $k$ is divisible by $p$. For  $k=pj$, their Fourier transforms are equality cases for $p-j+1$.
\end{itemize}
\end{theorem}
\begin{proof}
 We have seen that the functions given in the statement are equality cases. Let us prove that they are the only ones.  We assume first that $k<p$ and consider an equality case $f$. Without loss of generality we can assume that $f(0)\neq 0$. By Lemma \ref{G_m} it is sufficient to prove that $f$ is supported by some subgroup $G_m$. We define $ T$ as before as $\Pi_2(\supp f)$, where $\Pi_2$ is the projection $(l_1, l_2) \mapsto l_2$. By the same proof as in Proposition \ref{product}, we see that  $|T|$  take the values $1$ or $k$.  Note that the transform of $f$ under the action of $A(m)$ is also an equality case when $m$ is non isotropic, hence the same conclusion holds for all transforms $A(m) \supp f$ with $m$ non isotropic. We conclude from the next lemma, which we use for $E:=\supp f$.

 \begin{lemma} \label{geo} Let $2\leq \nu<p$ and $0\in E\subset \ZZ_p\times \ZZ_p$ with $|E|=\nu$  be given. Assume that $|\Pi_2(A(m)E)|=1$ or $\nu$ for all non isotropic $m\in \ZZ_p\times \ZZ_p$. Then, for some $m\in \ZZ_p\times \ZZ_p$, the set $E$ is contained in  $G_m$.
 \end{lemma}
 \begin{proof}
Let us first assume that there is a non isotropic element $m\in E$ and prove that $E$ is contained in $G_m=A(m)G_{(1, 0)}$. Equivalently, we want to prove that the set $A(\overline m)E$ is contained in $\ZZ_p \times \{0\}$. We already know that the point $(m_1^2+m_2^2, 0)$ belongs to $A(\overline m)E$. This implies that  $|\Pi_2(A(\overline m)E)|<\nu$, so that, by assumption, $|\Pi_2(A(\overline m)E)|=1 $. The conclusion follows at once. We are done when there are no isotropic directions.

 Let us now assume that there exists  two isotropic directions, $(1, \pm m_0)$. We want to prove that, if there is no isotropic elements in $E$, and hence $E$ is contained in the union of $G_{(1, m_0)}$ and $G_{(1, -m_0)}$, then $E$ is contained either in $G_{(1, m_0)}$ or in $G_{(1, -m_0)}$. So finally assume for a contradiction that there exists some $a:=(l_1,l_1m_0)\in E$ and also some $b:=(l_2,-l_2m_0)\in E$ with $l_i \not \equiv 0$. Let us take now $m:=(l_2-l_1, m_0(l_1+l_2))$, which is non isotropic for $m_1^2+m_2^2 \equiv (l_2-l_1)^2 - (l_1+l_2)^2 \equiv -4 l_1 l_2 \not \equiv 0$ mod $p$. An easy calculation yields $\Pi_2( A(m) a) = \Pi_2 (A(m) b) = 2 m_0 l_1 l_2$, hence $\Pi_2$ cannot be bijective on $A(m)E$ and so $|\Pi_2 A(m)E|=1$. This implies that $ A(m)E\subset \ZZ_p\times \{0\}$ or, which is equivalent, $E\subset  A(m)^{-1}G_{(1, 0)}=G_{\overline m}$. This is a contradiction to the working assumption that $E$ does not contain a non isotropic element,  hence this case cannot occur and the lemma is proven.

 \end{proof}

Let us now consider $p\leq k\leq p^2$.  For $k=pj$,  equality cases are deduced from the ones of $p-j+1$ by taking Fourier transforms,
  and we recognize the functions given in the statement of the theorem. Let us prove that there are no equality cases when $k$ cannot be divided by $p$.
  Assume that $pr\leq k<p(r+1)$ and $f$ is an equality case for $k$. Then, proceeding as in Proposition \ref{product} and defining  $\widehat T=\Pi_2 (\supp \widehat f)$
  as before, we find again that  $|\widehat T|$  takes the values $1$ or $p-j+1$. This is also valid for the supports of transforms of
   $\widehat f$ through all transformations $A(m)$ with $m_1^2+m_2^2\neq 0$. So  the support of $\widehat f$ satisfies the assumptions of Lemma \ref{geo}, with $p-j+1$ in place of $\nu$.  This implies that $\widehat f$ is supported in some $G_m$. The support of its Fourier transform, that is, $\supp f$ as well, has a cardinality that is a multiple of $p$.
    This proves that $k$ is a multiple of $p$.
\end{proof}

\section{The case of groups $\ZZ_{p^2}$, with $p$ prime}
As remarked in \cite{DVB2}, the functions $\theta (k, \ZZ_{p^2})$ and $\theta (k, \ZZ_p\times \ZZ_p)$ are identical (and equal to the function $u(k, G)$). We will see that the values of $f$ for which there are equality cases are the same except for $p^2-1$, for which, by Remark \ref{noconverse}, there are no equality cases for $\ZZ_p\times \ZZ_p$ while there are equality cases for $\ZZ_{p^2}$.
\smallskip

Let us  denote by $H$ the unique proper subgroup  of $G$, which is generated by the equivalence class of $p$ in $\ZZ_{p^2}$. The subgroup $H$  identifies with $\ZZ_p$ under the mapping $\ZZ_p \mapsto H$ that maps the congruent class $j=\dot l$ mod $p$ to the congruent class of $pl $ mod $p^2$. For simplification, we identify an element of $\ZZ_p$ with its representative in the interval $[0, p)$ and an element of $\ZZ_{p^2}$ with its representative in $[0, p^2)$ and we denote by $pj$ this element of $\ZZ_{p^2}$.

We will also use equivalent classes modulo $H$. Let us note that, if we identify $\ZZ_{p^2}$ with its representative that lies between $0$ and $p^2-1$, these $p$ equivalent classes may be described as
$$a+H =\{ a+pj\; ;\; j=0, 1,\cdots p-1\}$$
for $a=0, 1, \cdots p-1$.

 The following lemma describes equality cases for functions that are supported in $H$.
\begin{lemma}\label{H} Let $f$ be a function supported in $H$ and $k<p$. Then $f$ is an equality case for $(k, \ZZ_{p^2})$ if and only if the function $h$ defined on $\ZZ_p$ by  $h( j):=f(pj)$ for $j\in \ZZ_p$ is an equality case for $(k, \ZZ_p)$.
\end{lemma}
\begin{proof}
Let $f$ be an equality case for $(k, \ZZ_{p^2})$ supported in $H$, and $h$ as above, so that $|\supp h|=k$. A character $\xi$ on $\ZZ_{p^2}$ identifies with an element of $\ZZ_{p^2}$, which identifies with some $\alpha + p \eta$, with $\alpha, \eta =0, 1, \cdots, p-1$. If we denote by $\widehat h$ the Fourier transform of the function $h$ on $\ZZ_p$, then
$$ \widehat f(\alpha+p\eta)=\sum_j e^{-2i\pi \frac{\alpha j}p}h(j)=\widehat h(\alpha).$$ The cardinalities of the supports of $\widehat f$ and $\widehat h$ are such that  $p(p-k+1)=|\supp (\widehat f)|=p|\supp(\widehat h)|$. So $|\supp(\widehat h)|=p-k+1$ and $h$ is an equality case  for $(k, \ZZ_{p})$. Conversely such a function is an equality case.
\end{proof}
Translates of such equality cases, as well as Fourier transforms, are equality cases. We shall prove that they are the only ones, except when $k=p^2-1$. Recall that for $k=p^2-1$, by Proposition \ref{caseminus1} (ii) the Fourier transforms of the equality cases are of
 the form $\alpha\chi ( \mathds{1}_{\{x\}}- \mathds{1}_{\{y\}})$, with $\alpha$ a constant, $\chi$ a character and $x,y$ two points such that $x-y\notin H$.

 We will assume, from now on, that $k<p^2-1$. We have the following theorem.
\begin{theorem} \label{psquare}
Let $G=\ZZ_{p^2}$, with $p$ a prime number. Then

$\theta(k,G)=
\begin{cases}
p(p-k+1),&1\leq k \leq p;\\
p-[\frac{k}{p}]+1,& p\leq k \leq p^2.
 \end{cases}$

Moreover, when $k<p^2-1$, there are equality cases if and only if $\theta (k, G)<\theta (k-1, G)$. They can be described as follows.
 \begin{itemize}
  \item [(i)]
 For all $k\leq p$,  equality cases  are of the form
 \begin{equation}\label{case1}
 f(px+x')=g(x)\ \mathds{1}_{\{a\}}(x'), \ \ \ x,\;x'=0, \ldots, p-1,
\end{equation}
  with $g\in L(\ZZ_p)$ such that $|\supp(g)|=k$ and $|\supp(\widehat g)|=p-k+1$, and $a$ one of the values $0, \ldots, p-1$.
\item [(ii)]
For all $p\leq k < p^2-1$, there are equality cases if and only $k$ is divisible by $p$. For  $k=pj$, equality cases are of the form
  \begin{equation}\label{case2}
 f(px+x')=\chi (x)g(x'), \ \ \ x,\;x'=0, \ldots, p-1,
\end{equation}
with $\chi$ a character of $\ZZ_p$ and
 $g\in L(\ZZ_p)$ such that $|\supp(g)|=j$ and $|\supp(\widehat g)|=p-j+1$.
  \end{itemize}
\end{theorem}

 \begin{proof}
 Even if not stated in the same way, most of this theorem is  proved in \cite{DVB1} (and even its analog for any arbitrary power of $p$) but using a different vocabulary, with decomposition of Fourier matrices that are not simple to follow from a group point of view. So we give a complete proof in our vocabulary.

  As in the case of  products of groups, the computation of $\theta$ is given by Meshulam in \cite{M}. We nevertheless give  some details of the proof, which we will use again for
   equality cases. For $f$  a nonzero function such that $|\supp(f)|\leq k$, we define $s$ and $t$ as follows: $t$ is  the number of $a=0, \ldots, p-1$ such that the function $f_a$ defined by $f_a(x):=f(a+px)$, is not identically $0$ on $\ZZ_p$ ; $s$ is the minimum of $|\supp (f_a)|$. Clearly $st\leq k$. It is then proved in \cite{M}  that
 $$|\supp (\widehat f)|\geq \theta (s, \ZZ_p)\theta (t, \ZZ_p)=(p+1-s)(p+1-t),$$
 so that
 $$
\theta(k,G)\geq\min\{(p-s+1)(p-t+1);\;st\leq k,\; 1\leq s,\;t\leq p\} .
$$
As we have already mentioned,   the expression in the right hand side is the same as in the product case. The minimum is attained for $(1, k)$ or $(k,1)$ when $k\leq p$ (resp.
 $(p, j)$ or $(j,p)$ when $jp\leq  k<(j+1)p$, with $j=1, \ldots, p-1$). We have seen by Lemma \ref{H} that there are equality cases. So  the function $\theta $ is equal to this minimum.

It remains to prove that there are no other equality cases. Let us first assume that $k\leq p$. Let $f$ be an equality case. By invariance by translation,
 we can assume that $f(0)\neq 0$ (so that $a$ in \eqref{case1} will be $0$). We define $s$ and $t$ as before. If $t=1$, we conclude directly that the support of $f$ is contained in  $H$ and use Lemma \ref{H}.
 Let us prove that there is no possible equality case $f$ for which $t=k>1$. Assume that there is such a function $f$. Because the support of $f$ has cardinality $k$,
each nonzero $f_a$ is a Dirac mass, so that
\begin{equation}\label{reduction}
f=\sum _{l=1}^k c_l\mathds{1}_{\{a_l+p j_l\}}\qquad \left(
c_\ell\ne 0, \ell=1,\dots,k\leq p \right).
\end{equation}
 The rest of part
(i) obtains directly from the following lemma, because
such a function, whose spectrum has size strictly greater than $p(p-k+1)$, is not an equality case.
\begin{lemma}\label{diagonal}
For $k\geq 2$ let $f$ be a nonzero function that may be written as in \eqref{reduction},
with $a_l$ taking k different values between $0$ and $p-1$ and $j_l$ integers between $0$ and $p-1$. Then, if $k>2$, its spectrum has size at least $p(p-k+2)$. If $k=2$, its spectrum has size at least $p^2-1$.
\end{lemma}
\begin{proof}
  Let us first assume that $k=2$. We assume that $_widehat f$ vanishes at one point, otherwise there is nothing to prove. Without loss of generality (eventually multiplying $f$ by a character), we may assume that $\widehat f$ vanishes at $0$, so that $f$ may be written as $c(\mathds{1}_{\{x\}}- \mathds{1}_{\{y\}})$. We have already seen that such a function does not vanish at any other point, unless $x-y\in H$, which is contrary to the assumption.

  From now on we assume that $k>2$. We have the expression
$$\widehat f(\alpha + p\eta)=\sum_{l=1}^k c_l e^{-2i\pi \frac{(a_l+pj_l)\alpha}{p^2}}e^{-2i\pi \frac{\eta a_l}{p}},$$
where $\alpha$ and $\eta$ may take the values $0, 1, \cdots, p-1$. As a consequence we see that, for fixed $\alpha =0, 1, \cdots, p-1$, the function defined on $\ZZ_p$ by $\eta\mapsto \widehat f(\alpha + p\eta)$  is the Fourier transform of the function $g_\alpha$ on $\ZZ_p$, defined by
$$g_\alpha:= \sum_{l=1}^k c_l e^{-2i\pi \frac{(a_l+pj_l)\alpha}{p^2}}\mathds{1}_{\{a_l\}}.$$
The support of $g_\alpha$ has cardinality $k$. So  $|\supp (\widehat{g_\alpha})|\geq p+1-k$ by
 Theorem \ref{Tao}.We proceed by contradiction and  assume that the cardinality of the support of $\widehat f$ is less than $p(p-k+2)$. Hence there exists $\beta$ such that   $|\supp (\widehat{g_\beta})|= p+1-k$. We call  $z_1, \cdots, z_{k-1}$ the zeros of $\widehat{g_\beta}$. Moreover, one of  the $p-1$ other ones, say $\widehat{g_{\beta'}}$, vanishes at least at one point, say $z_k$. Otherwise, we would have $|\supp (\widehat{f})|=p(p-1)+p-k+1 \geq  p(p-k+2)$, which contradicts our assumption.

   We write $\alpha=\beta'-\beta$ and call $d_l$ the value of $g_\beta$ at $a_l$. By assumption the coefficients $d_l$ are nonzero for $l=1, \cdots, k$. Moreover,
    they are nonzero solutions of the system of $k$ linear homogeneous equations with matrix $(\gamma_{s, l})$, which is obtained as follows.  The $k-1$ first ones express the fact that $\widehat{g_\beta}$ vanishes at $z_1, \cdots, z_{k-1}$,  the last one  the fact $\widehat{g_{\beta'}}(z_k)=0$. Let us write $w=e^{-\frac{2i\pi}{p^2}}$. For $s<k$, we have $\gamma_{s, l}=w^{pa_lz_s}$, while $\gamma_{k, l}=w^{\alpha a_l}w^{p(j_l\alpha +a_lz_k)}$.  A contradiction is obtained if we can prove that the determinant is nonzero. This determinant value is the value at $w$ of a polynomial in one variable $X$ with coefficients in $\ZZ$. If we expand the determinant along the last row,  it can be written as
$$P=\sum_{l=1}^k X^{a_l\alpha}X^{p(j_l\alpha+a_l z_k)} \; (Q_l\circ X^p) ),$$
where $Q_l$'s come from cofactors obtained from the $k-1$ first rows. Up to a multiplicative constant, each $Q_j(w^p)$ is a $(k-1)\times (k-1)$ determinant extracted from the Fourier matrix of $\ZZ_p$, so it does not vanish at $w$ by Chebotarev's Lemma (see \cite{Tao}).

The $a_l$ are all different and the $a_l \alpha$ belong to different congruent classes mod $p$. So, if we reorder the terms,  $P$ can be written as
 $$ P=\sum_{j=0}^{p-1} X^j (P_j\circ X^p),$$
 with $P_j$ that does not vanish at $w$ for $k$ values of $j$.  Now let us assume for a contradiction that $P(w)=0$.
    It follows that the cyclotomic polynomial
    $1+X^p+\dots+X^{(p-1)p}$ divides $P$ in ${\mathbb
    Z}[X]$, that is, with a suitable $Q\in {\mathbb Z}[X]$,
    $P$ can be written as
 $$P= Q(1+X^p+\cdots +X^{(p-1)p}).$$
 We then use the fact that every polynomial can be written in a unique way as a sum $\sum_{j=0}^{p-1} X^{j}(R_j\circ X^p)$ to see that each $P_j\circ X^p$ can also be factorized by the polynomial $1+X^p+\cdots +X^{(p-1)p}$. So it vanishes at $w$, which gives a contradiction and proves that there is no such $f$.
\end{proof}

{\sl Continuation of the proof of Theorem \ref{psquare}.} The theorem has been proved when $1\leq k\leq p$. It is easy to conclude for $k=pj$, with $1\leq j \leq p$, by using Lemma \ref{reciproque}.  We now consider values $k$ such that $pj<k\leq pj+p-1$ and  denote $\ell:=\theta(pj,G)=p-j+1$. Considering Fourier transforms, we are led to prove that there does not exist a function $\phi$ whose support has size $\ell$ and whose Fourier transform has size $k$, with $p(p-\ell +1)< k<p(p-\ell+2)$. We proceed by contradiction. Coming back to Meshulam's proof,
 if we define $s$ and $t$ as before, we know that
 $k=|\supp (\widehat \phi)|\geq (p+1-s)(p+1-t)$ and $st\leq \ell$. The two inequalities $p(p-\ell+1)+p-1\geq (p+1-s)(p+1-t)$ and $st\leq \ell$ imply as before that $t=1$ or $t=\ell$. Recall that $t$ is the number of nonzero $\phi_y$'s. For $t=1$, as in the proof of Lemma \ref{H}, the support of $\widehat \phi$ is a multiple of $p$, which we have excluded.  We use Lemma \ref{diagonal} to see that there is no such function $\phi$ with $t=\ell$.

That concludes the proof of the theorem.
\end{proof}

\noindent {\bf Acknowledgement.} The authors thank the referee for his/her careful reading and for having corrected a mistake in the first version of this paper. The numerous comments of the referee have helped the authors to improve considerably their manuscript.

\end{document}